\title[Frankel's property in the Schwarzschild manifolds]{Frankel's property for free boundary minimal hypersurfaces in the Riemannian Schwarzschild manifolds}
\author[J. Lee]{Jaehoon Lee}
\address[]{Jaehoon Lee, School of Mathematics, Korea Institute for Advanced Study, 85 Hoegiro, Dongdaemun-gu, Seoul 02455, Republic of Korea}
\email{jaehoonlee@kias.re.kr}
\author[E. Yeon]{Eungbeom Yeon}
\address[]{Eungbeom Yeon, Department of Mathematical Sciences, Pusan National University, Busan 46241, Republic of Korea}
\email{ebeom.yeon@pusan.ac.kr}
\begin{document}

\newtheorem{theorem}{theorem}[section]
\newtheorem{thm}[theorem]{Theorem}
\newtheorem{lemma}[theorem]{Lemma}
\newtheorem{ex}[theorem]{Example}
\newtheorem{cor}[theorem]{Corollary}
\newtheorem{prop}[theorem]{Proposition}
\newtheorem{rmk}[theorem]{Remark}
\newtheorem{Question}[theorem]{Question}
\newtheorem{conj}[theorem]{Conjecture}
\newtheorem*{mainthm1}{Theorem 1}
\newtheorem*{mainthm2}{Theorem 2}

\renewcommand{\theequation}{\thesection.\arabic{equation}}
\newcommand{\RNum}[1]{\uppercase\expandafter{\romannumeral #1\relax}}
\newcommand{\R}{\mathbb{R}}
\newcommand{\C}{\mathbb{C}}
\newcommand{\grad}{\nabla}
\newcommand{\laplacian}{\Delta}
\newcommand{\tgamma}{\tilde{\gamma}}
\newcommand{\ttau}{\tilde{\tau}}
\newcommand{\pp}{\Phi}
\newcommand{\tkappa}{\tilde{\kappa}}
\renewcommand{\d}{\textup{d}}
\newlength{\mywidth}
\newcommand\bigfrown[2][\textstyle]{\ensuremath{%
  \array[b]{c}\text{\resizebox{\mywidth}{.7ex}{$#1\frown$}}\\[-1.3ex]#1#2\endarray}}
\newcommand{\arc}[1]{{%
  \setbox9=\hbox{#1}%
  \ooalign{\resizebox{\wd9}{\height}{\texttoptiebar{\phantom{A}}}\cr#1}}}

\subjclass[2020]{53A10, 53C42}
\keywords{Asymptotically flat manifold, Schwarzschild manifold.}

\begin{abstract}
We study the behavior of minimal hypersurfaces in the Schwarzschild $n$-manifolds that intersect the horizon orthogonally along the boundary. We show that a free boundary minimal hypersurface and a totally geodesic hyperplane must intersect when the distance between them is achieved in a bounded region. We also discuss when the Schwarzschild metric is perturbed in a way that its scalar curvature is no longer positive.
\end{abstract}

\maketitle

\section{Introduction}\label{intro}
\setcounter{equation}{0}
In the theory of general relativity, the Schwarzschild solution to an Einstein field equation is an important model in which one can describe local geometry of space-time in the solar system. It represents an empty space-time outside the spherical mass, which also has spherical symmetry and asymptotically flat behavior (\cite{HE}). The Schwarzschild solution can be described as follows:
\begin{align*}
M^n=\left\{ x \in \mathbb{R}^n : |x| \ge \left( \frac{m}{2}\right)^{\frac{1}{n-2}} \right\},\  g^n_{Sch}=\left( 1+\frac{m}{2|x|^{n-2}}\right)^{\frac{4}{n-2}}g_{\mathbb{R}^n}.
\end{align*}
Here, $|x| = g_{\mathbb{R}^n}\left(x,x\right)^{\frac{1}{2}}$  and $m$ is defined as an ADM mass of the manifold (see \cite{C} for the definition). The spherical mass body is called a horizon, denoted as $\partial 
M^n$ throughout the paper, and it is a totally geodesic hypersurface in $M^n$. Additional examples of embedded minimal hypersurfaces are totally geodesic hyperplanes orthogonal to $\partial M^n$. It should be noted that the Schwarzchild solution is the only radially symmetric solution to the Einstein equation. As the term on the metric is the Green function of Laplacian, we see that $M^n$ is scalar flat. 

One of the most important aspects of the Riemannian Schwarzschild manifold $M^n$ is to study minimal submanifolds in $M^n$. Schoen and Yau in \cite{SY} proved that asymptotically planar stable minimal surfaces cannot exist in asymptotically Schwarzschildean manifolds. In \cite{C}, it was generalized to non-existence results of complete stable properly embedded minimal surfaces. As mentioned in \cite{C}, properness in the theorem can be omitted if there exists a horizon boundary in the manifold (\cite{CCE}). In fact, positivity of ADM mass plays an important role in obstructing the existence of stable minimal surfaces. In \cite{CM}, again non-existence results of minimal surfaces in asymptotically Schwarzschildean 3-manifolds were given. They showed that there is no such a minimal surface perturbing the Euclidean catenoid in asymptotically Schwarzschildean 3-manifolds. It is worthwhile noting that the obstruction does not hold for higher dimensional manifolds in \cite{C}. 

In \cite{CK}, obstruction of the horizon became more clear as they showed that there exist complete properly embedded minimal planes passing through an arbitrary given point in asymptotically flat 3-manifolds assuming the absence of closed minimal surfaces. One can also consider uniqueness and existence problems on complete minimal surfaces with a number of boundaries supported on some constraint surface. It was noted in \cite{C} that the above non-existence results also apply to the stable free boundary minimal surfaces with weakly mean convex boundary supported on the horizon in asymptotically Schwarzschildean 3-manifolds. 

Indeed, a question was raised in \cite{CK} that asks if the horizon and totally geodesic planes are the only embedded minimal surfaces in the Riemannian Schwarzschild manifold $M^n$. Brendle \cite{B} showed the analogous version of the problem for closed constant mean curvature hypersurfaces in manifolds with certain warped product metrics, including the Schwarzschild metric. The above question is interesting in the following manner. 
In this paper, we are mainly interested in how the metric affects the behavior of free boundary minimal hypersurfaces in the Schwarzschild manifolds. First, we get the following theorem. 
\begin{mainthm1} \label{mainthm1} \text{\normalfont (Theorem \ref{thm1})}
Let $\Sigma^{n-1}$ be a smooth connected two-sided properly immersed minimal hypersurface in the Riemannian Schwarzschild $n$-manifold $(M^n, g^n_{Sch})$ such that $\partial \Sigma^{n-1} \subset \partial M^n$ and $\Sigma^{n-1}$ meets $\partial M^n$ orthogonally along $\partial \Sigma^{n-1}$. Suppose that the distance between $\Sigma^{n-1}$ and a totally geodesic plane $P$ is achieved in a bounded region in $M^n$. Then then $\Sigma^{n-1}$ and $P$ must intersect. 
\end{mainthm1}

As an immediate corollary to the above theorem, we get the following half-space theorem.
\begin{mainthm2} \label{mainthm2} \text{\normalfont (Corollary \ref{cor1})}
Let $\Sigma^{n-1}$ and $(M^n, g^n_{Sch})$ be as above. Suppose that $\Sigma^{n-1}$ is on one side of a totally geodesic hyperplane $P$ and the distance between $\Sigma^{n-1}$ and $P$ is achieved in a bounded region in $M^n$. Then $\Sigma^{n-1}=P$.
\end{mainthm2}

Note that Theorem $2$ could be seen as a half-space theorem in the Schwarzschild manifold. Contrary to cases of minimal hypersurfaces in $\mathbb{R}^n$ where we can get half-space theorems applying maximum principle at infinity using catenoidal piece (\cite{HM}), the Schwarzschild manifolds are harder to apply maximum principle since there are few minimal examples known in addition to the fact that the minimality is not invariant under translations and scaling in $M^n$. The main idea of proving Theorem $1$ was to apply Frankel's argument on geodesics in $M^n$. As in the literature (for example, see \cite{F,L}), Frankel's argument was normally adapted under the variation of geodesics when the ambient manifold has positive Ricci curvature. It is very unlikely to expect the argument could be applied to prove the result.

We should mention that following arguments in the proof of the slab theorem (\cite{CCE}) one can get the same result in Theorem \ref{mainthm1} in the absence of the distance condition when $n=3$. However, we can perturb the Schwarzschild metric to show that Theorem \ref{mainthm1} holds in metrics whose scalar curvature can be both negative and positive. This represents that our method points a new direction compared to \cite{CCE} despite the condition on the distance at infinity. See Section \ref{sec4} for more details.
 
The paper is organized as follows. In Section \ref{pre} we give a detailed description of the Riemannian Schwarzschild manifolds, including properties of geodesics. New observations on Ricci curvature along geodesics are given so that one can see how the Schwarzschild metric affects the geometry of minimal hypersurfaces in a global way. With the help of these observations, we prove our main result (Theorem \ref{thm1}) which directly allows us to prove the half-space theorem (Corollary \ref{cor1}) in Section \ref{sec3}. In the last section, we give a way to perturb the Schwarzschild metric to expand our results to more general asymptotically flat metrics.

\section*{Acknowledgements}
We would like to thank Jaigyoung Choe for his interest and valuable comments. We also wish to express our gratitude to Hojoo Lee for raising a question on minimal cones in Remark \ref{conecone} and to a reviewer for pointing out \cite[Theorem 1.7]{CCE} in Remark \ref{slabslab} during the peer-review process. J. Lee was supported by a KIAS Individual Grant MG086401 at Korea Institute for Advanced Study. E. Yeon was supported by National Research Foundation of Korea NRF-2022R1C1C2013384 and in part by NRF-2021R1A4A1032418.


\section{Geometry of the Riemannian Schwarzschild manifold}\label{pre}
\subsection{Doubled Schwarzschild manifolds}
Let $\widehat{M^n}=\mathbb{R}^n\backslash\{(0,0,\cdots,0)\}$ ($n\geq3$) be a Riemannian manifold with a metric $\widehat{g^n}:=\left(1+\frac{m}{2|x|^{n-2}}\right)^{\frac{4}{n-2}}g_{\mathbb{R}^n}$, where $g_{\mathbb{R}^n}$ is the Euclidean metric and $|\cdot|$ is the Euclidean norm. Consider a map $I: \widehat{M^n}\to\widehat{M^n}$ given by
\begin{align*}
I(x)=\left(\frac{m}{2}\right)^{\frac{2}{n-2}}\cdot\frac{x}{|x|^2}, \ \forall x\in\mathbb{R}^n\backslash\{(0,0,\cdots, 0)\},
\end{align*}
which is an inversion with respect to the Euclidean sphere $\left\{x\in\mathbb{R} ^n:|x|=\left(\frac{m}{2}\right)^{\frac{1}{n-2}}\right\}$. Then it gives rise to an isometry of $\widehat{M^n}$. This follows from the fact that $I$ is a conformal map in the Euclidean space with the conformal factor $\left(\frac{m}{2}\right)^{\frac{4}{n-2}}\frac{1}{|x|^4}$ and $I$ satisfies 
\begin{align*}
\left(1+\frac{m}{2\left|I(x)\right|^{n-2}}\right)^{\frac{4}{n-2}}\cdot\left(\frac{m}{2}\right)^{\frac{4}{n-2}}\frac{1}{|x|^4}=\left(1+\frac{m}{2|x|^{n-2}}\right)^{\frac{4}{n-2}}.
\end{align*}
Since the Riemannian Schwarzschild $n$-manifold $(M^n, g^n_{Sch})$ is defined by
\begin{align*}
M^n=\left\{x\in\mathbb{R}^n:|x|\geq \left(\tfrac{m}{2}\right)^{\frac{1}{n-2}}\right\},\  g^n_{Sch}=\left(1+\tfrac{m}{2|x|^{n-2}}\right)^{\frac{4}{n-2}}g_{\mathbb{R}^n},
\end{align*}
we may write $\widehat{M^n}=M^n\cup I(M^n)$. Therefore $\widehat{M^n}$ is often called the \emph{doubled Schwarzschild $n$-manifold}. There are totally geodesic hypersurfaces in $M^n$ as sets of fixed points under the isometries:
\begin{itemize}
\item[-] The horizon $\partial M^n=\left\{x\in\mathbb{R}^n:|x|=\left(\frac{m}{2}\right)^{\frac{1}{n-2}}\right\}$. 
\item[-] Hyperplanes passing through the origin.
\end{itemize}
One can prove that these are the only totally geodesic hypersurfaces in $\widehat{M^n}$. Indeed, being a totally geodesic in $\widehat{M^n}$ implies being a totally umbilic in the Euclidean space, thus becoming part of an $(n-1)$-sphere or a hyperplane. Among spheres and hyperplanes, it can be seen that only the horizon and hyperplanes passing through the origin are totally geodesic in the doubled Schwarzschild manifold.
\subsection{Geodesics}
Let us introduce a function $\phi_{n,m}$ and express the conformal factor as $e^{2\phi_{n,m}(x)}=\left(1+\frac{m}{2|x|^{n-2}}\right)^{\frac{4}{n-2}}$ for notational convenience. We describe geodesics in $(M^n, g^n_{Sch})$ that are perpendicular to totally geodesic hyperplanes. Note that each of them is contained in a $2$-plane determined by the initial position vector and the normal direction of the totally geodesic hyperplane. Therefore it is sufficient to consider the following forms: 
\begin{align}\label{gamma}
\begin{cases}
&\gamma(s)=r(s)\left(\cos\theta(s), \sin\theta(s), 0,\cdots,0\right),\\
&\theta(0)=0,\ \dot{\theta}(0)>0,\ r(0)=r_0,\ \dot{r}(0)=0.
\end{cases} 
\end{align}
Here, $s$ is an arclength and $r_0\geq\left(\frac{m}{2}\right)^{\frac{1}{n-2}}$. Since
\begin{align}\label{dgamma}
\dot{\gamma}=\dot{r}\left(\cos\theta,\sin\theta,0,\cdots,0\right)+r\dot{\theta}\left(-\sin\theta,\cos\theta,0,\cdots,0\right),
\end{align}
we have
\begin{align}\label{arclength}
1=g^n_{Sch}(\dot{\gamma},\dot{\gamma})=e^{2\phi_{n,m}(\gamma)}g_{\mathbb{R}^n}(\dot{\gamma},\dot{\gamma})=e^{2\phi_{n,m}(\gamma)}\left(\dot{r}^2+r^2\dot{\theta}^2\right)
\end{align}
and 
\begin{align}\label{init}
r_0\dot{\theta}(0)=e^{-\phi_{n,m}(\gamma(0))}=\left(1+\frac{m}{2r_0^{n-2}}\right)^{-\frac{2}{n-2}}.
\end{align}
Recall that the Riemannian connection $\nabla$ of $M^n$ is given by
\begin{align}\label{conn}
\nabla_XY=\overline{\nabla}_XY+d\phi_{n,m}(X)Y+d\phi_{n,m}(Y)X-g_{\mathbb{R}^n}(X,Y)\overline{\nabla}\phi_{n,m},
\end{align}
where $\overline{\nabla}$ is the Riemannian connection of the Euclidean space and $X,Y$ are tangent vector fields. Now the geodesic equation for $\gamma$ becomes
\begin{align}\label{geo}
\nabla_{\dot{\gamma}}\dot{\gamma}&=\overline{\nabla}_{\dot{\gamma}}\dot{\gamma}+2d\phi_{n,m}(\dot{\gamma})\dot{\gamma}-g_{\mathbb{R}^n}(\dot{\gamma}, \dot{\gamma})\overline{\nabla}\phi_{n,m}\nonumber\\
&=\ddot{\gamma}+2g_{\mathbb{R}^n}(\overline{\nabla}\phi_{n,m}, \dot{\gamma})\dot{\gamma}-e^{-2\phi_{n,m}(\gamma)}\overline{\nabla}\phi_{n,m}=0,
\end{align}
where we used (\ref{arclength}) in the second equality. (\ref{geo}) together with 
\begin{align*}
\overline{\nabla}\phi_{n,m}=-\frac{m}{r^n\left(1+\frac{m}{2r^{n-2}}\right)}\gamma,
\end{align*}
we obtain
\begin{align}\label{ggeo}
\ddot{\gamma}-\frac{2m\dot{r}}{r^{n-1}\left(1+\frac{m}{2r^{n-2}}\right)}\dot{\gamma}+\frac{m}{r^n\left(1+\frac{m}{2r^{n-2}}\right)^{\frac{n+2}{n-2}}}\gamma=0.
\end{align}
Using (\ref{gamma}), (\ref{dgamma}) and 
\begin{align*}
\ddot{\gamma}=\left(\ddot{r}-r\dot{\theta}^2\right)\left(\cos\theta,\sin\theta,0,\cdots,0\right)+\left(2\dot{r}\dot{\theta}+r\ddot{\theta}\right)\left(-\sin\theta,\cos\theta,0,\cdots,0\right),
\end{align*}
(\ref{ggeo}) is expressed as
\begin{align*}
&\left(\ddot{r}-r\dot{\theta}^2-\frac{2m\dot{r}^2}{r^{n-1}\left(1+\frac{m}{2r^{n-2}}\right)}+\frac{m}{r^{n-1}\left(1+\frac{m}{2r^{n-2}}\right)^{\frac{n+2}{n-2}}}\right)\left(\cos\theta,\sin\theta,0,\cdots,0\right)\\
&+\left(2\dot{r}\dot{\theta}+r\ddot{\theta}-\frac{2m\dot{r}\dot{\theta}}{r^{n-2}\left(1+\frac{m}{2r^{n-2}}\right)}\right)\left(-\sin\theta,\cos\theta,0,\cdots,0\right)=0.
\end{align*}
This implies
\begin{align}\label{first}
\ddot{r}-r\dot{\theta}^2-\frac{2m\dot{r}^2}{r^{n-1}\left(1+\frac{m}{2r^{n-2}}\right)}+\frac{m}{r^{n-1}\left(1+\frac{m}{2r^{n-2}}\right)^{\frac{n+2}{n-2}}}=0
\end{align}
and
\begin{align}\label{second}
2\dot{r}\dot{\theta}+r\ddot{\theta}-\frac{2m\dot{r}\dot{\theta}}{r^{n-2}\left(1+\frac{m}{2r^{n-2}}\right)}=0.
\end{align}
It follows from (\ref{second}) that
\begin{align*}
&\frac{d}{ds}\left(r^2\left(1+\frac{m}{2r^{n-2}}\right)^{\frac{4}{n-2}}\dot{\theta}\right)\\
&=r\left(1+\frac{m}{2r^{n-2}}\right)^{\frac{4}{n-2}}\left(2\dot{r}\dot{\theta}+r\ddot{\theta}-\frac{2m\dot{r}\dot{\theta}}{r^{n-2}\left(1+\frac{m}{2r^{n-2}}\right)}\right)\\
&=0
\end{align*}
and thus
\begin{align}\label{dtheta}
r^2\left(1+\frac{m}{2r^{n-2}}\right)^{\frac{4}{n-2}}\dot{\theta}=r_0^2\left(1+\frac{m}{2r_0^{n-2}}\right)^{\frac{4}{n-2}}\dot{\theta}(0)=:C_0.
\end{align}
Note that $C_0=r_0\left(1+\frac{m}{2r_0^{n-2}}\right)^{\frac{2}{n-2}}$ by (\ref{init}). 

It is well known that geodesics are complete in $M^n$ because the metric on $M^n$ is conformal to the Euclidean one. However, we prove the following lemma to see a more detailed aspect of such geodesics:
\begin{lemma}\label{rinfty}
If $r_0>\left(\frac{m}{2}\right)^{\frac{1}{n-2}}$, then $r(s)$ is a strictly increasing function of $s$ and $\displaystyle\lim_{s\to+\infty}r(s)=+\infty$.
\end{lemma}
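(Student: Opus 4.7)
The plan is to reduce the motion of $r(s)$ to a one-dimensional energy problem by combining the two first integrals derived above. Eliminating $\dot\theta$ from (\ref{dtheta}) and substituting into the arclength identity (\ref{arclength}) produces
\begin{equation*}
\dot r(s)^2 \;=\; e^{-2\phi_{n,m}(\gamma(s))}\,\frac{f(r(s)) - f(r_0)}{f(r(s))},\qquad f(r) := r^2\left(1+\frac{m}{2r^{n-2}}\right)^{\frac{4}{n-2}},
\end{equation*}
where I used (\ref{init}) to identify $C_0^2 = f(r_0)$. A direct differentiation gives
\begin{equation*}
f'(r) = 2r\left(1+\frac{m}{2r^{n-2}}\right)^{\frac{6-n}{n-2}}\left(1 - \frac{m}{2r^{n-2}}\right),
\end{equation*}
so $f$ is strictly increasing on $[(m/2)^{1/(n-2)},\infty)$, and in particular on $[r_0,\infty)$. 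Consequently $\dot r(s)^2 > 0$ whenever $r(s) > r_0$.

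Second, I would determine the direction of motion at the turning point by evaluating (\ref{first}) at $s=0$ with $\dot r(0)=0$ and simplifying via (\ref{init}), which yields
\begin{equation*}
\ddot r(0) = \frac{1}{r_0}\left(1 + \frac{m}{2r_0^{n-2}}\right)^{-\frac{n+2}{n-2}}\left(1 - \frac{m}{2r_0^{n-2}}\right) > 0
\end{equation*}
under the assumption $r_0 > (m/2)^{1/(n-2)}$. In particular $\dot r > 0$ on some interval $(0,\epsilon)$. To upgrade this to strict monotonicity on $[0,\infty)$, I would run a standard return-time argument: if $s_1 > 0$ were the smallest time with $r(s_1) = r_0$, then $r > r_0$ on $(0,s_1)$ and hence $\dot r^2 > 0$ there by the energy identity; continuity together with $\dot r > 0$ near $0$ then forces $\dot r > 0$ throughout $(0,s_1)$, so $r(s_1) > r_0$, a contradiction.

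For unboundedness, since $r$ is nondecreasing on $[0,\infty)$, it has a limit $r_\infty \in (r_0, +\infty]$; if $r_\infty$ were finite, the energy identity would force $\dot r(s)$ to converge to a strictly positive constant, making $r$ grow at least linearly and contradicting finiteness. Hence $r(s) \to +\infty$. The main conceptual point is recognizing that the conserved quantity $C_0$ in (\ref{dtheta}) is precisely the turning-point value $\sqrt{f(r_0)}$ of the effective potential $f$; once that identification is made, the remainder reduces to a standard one-dimensional phase-line analysis in which the sign of $\ddot r(0)$ selects the outward orbit.
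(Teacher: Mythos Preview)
Your proof is correct and follows essentially the same route as the paper's: both derive the energy identity $\dot r^2 = e^{-2\phi}\bigl(f(r)-f(r_0)\bigr)/f(r)$ from (\ref{arclength}) and (\ref{dtheta}), compute $\ddot r(0)>0$ from (\ref{first}) and (\ref{init}) to select the outward branch, use strict monotonicity of $f$ on $\bigl[(m/2)^{1/(n-2)},\infty\bigr)$ to preclude a return to $r_0$, and conclude unboundedness from the fact that $\dot r$ stays bounded away from zero once $r>r_0$. Your return-time argument and the limit argument for $r_\infty$ are slightly more explicit than the paper's treatment, but the substance is identical.
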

\begin{proof}
By (\ref{gamma}), (\ref{init}) and (\ref{first}),
\begin{align*}
\ddot{r}(0)=\frac{r_0^{n-2}-\frac{m}{2}}{r_0^{n-1}\left(1+\frac{m}{2r_0^{n-2}}\right)^{\frac{n+2}{n-2}}}>0.
\end{align*}
Hence $\dot{r}(s)>0$ for $s$ near $0$. Using (\ref{arclength}) and (\ref{dtheta}), we derive
\begin{align*}
\dot{r}^2=\frac{r^2\left(1+\frac{m}{2r^{n-2}}\right)^{\frac{4}{n-2}}-C_0^2}{r^2\left(1+\frac{m}{2r^{n-2}}\right)^{\frac{8}{n-2}}}
\end{align*}
and therefore
\begin{align}\label{rdot}
\dot{r}=\frac{\sqrt{r^2\left(1+\frac{m}{2r^{n-2}}\right)^{\frac{4}{n-2}}-C_0^2}}{r\left(1+\frac{m}{2r^{n-2}}\right)^{\frac{4}{n-2}}}
\end{align}
for $s$ near $0$. Observing (\ref{rdot}) $r^2\left(1+\frac{m}{2r^{n-2}}\right)^{\frac{4}{n-2}}-C_0^2$ is a strictly increasing function of $r$ if $r>\left(\frac{m}{2}\right)^{\frac{1}{n-2}}$ and is zero at $s=0$, so it cannot achieve zero again as long as (\ref{rdot}) holds. Thus, we may conclude that (\ref{rdot}) holds for all $s\geq0$. Moreover, the right hand side of (\ref{rdot}) is also an increasing function of $r$, so it is clear that $r\to+\infty$ as $s$ increases.
\end{proof}

\subsection{Ricci curvature along geodesics}
We continue to deal with geodesics as in (\ref{gamma}) and show a new observation in Proposition \ref{ricciineq} that plays an essential role in this paper. 

Let $\gamma$ be a geodesic in $M^n$ as in (\ref{gamma}). To begin with, we compute the Ricci curvature of $M^n$ along $\gamma$. It is not difficult to derive from (\ref{conn}) that 
\begin{align*}
\text{Ric}_{M^n}(\dot{\gamma}, \dot{\gamma})=&\overline{\text{Ric}}(\dot{\gamma},\dot{\gamma})-(n-2)\overline{\text{Hess}}\phi_{n,m}(\dot{\gamma},\dot{\gamma})+(n-2)\left(d\phi_{n,m}(\dot{\gamma})\right)^2\\
&-(n-2)\left|\overline{\nabla}\phi_{n,m}\right|^2g_{\mathbb{R}^n}(\dot{\gamma},\dot{\gamma})-\overline{\Delta}\phi_{n,m} \cdot g_{\mathbb{R}^n}(\dot{\gamma},\dot{\gamma}),
\end{align*}
where all the over-lined terms correspond to the Euclidean metric. The following result is just a simple computation, so we only state the formula without proof:
\begin{lemma}\label{ricci}
\begin{align*}
\emph{Ric}_{M^n}(\dot{\gamma}, \dot{\gamma})=\frac{m(n-2)}{r^n\left(1+\frac{m}{2r^{n-2}}\right)^{\frac{2n}{n-2}}}\left[\frac{nC_0^2}{r^2\left(1+\frac{m}{2r^{n-2}}\right)^{\frac{4}{n-2}}}-(n-1)\right],
\end{align*}
where $C_0$ is a constant defined in (\ref{dtheta}).
\end{lemma}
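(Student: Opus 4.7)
The plan is to start from the conformal-change formula for $\text{Ric}_{M^n}(\dot{\gamma},\dot{\gamma})$ displayed just before the lemma, observe that $\overline{\text{Ric}}\equiv 0$ since the background metric is Euclidean, and then reduce every remaining term to a simple radial expression. Writing $\phi_{n,m}(x)=h(r)$ with $h(r):=\frac{2}{n-2}\log\bigl(1+\frac{m}{2r^{n-2}}\bigr)$ and $r=|x|$, the standard formulas for a radial function on Euclidean space give $|\overline{\nabla}\phi_{n,m}|^2=(h'(r))^2$, $\overline{\Delta}\phi_{n,m}=h''(r)+\frac{n-1}{r}h'(r)$, and
\begin{align*}
\overline{\text{Hess}}\phi_{n,m}(\dot{\gamma},\dot{\gamma})=h''(r)(\hat{r}\cdot\dot{\gamma})^2+\tfrac{h'(r)}{r}\bigl(|\dot{\gamma}|^2_{\mathbb{R}^n}-(\hat{r}\cdot\dot{\gamma})^2\bigr)=h''(r)\dot{r}^2+h'(r)r\dot{\theta}^2,
\end{align*}
where the last equality uses $\gamma\cdot\dot{\gamma}=r\dot{r}$ (immediate from (\ref{gamma})--(\ref{dgamma})). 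One also has $(d\phi_{n,m}(\dot{\gamma}))^2=(h'(r))^2\dot{r}^2$ and $|\dot{\gamma}|^2_{\mathbb{R}^n}=\dot{r}^2+r^2\dot{\theta}^2=e^{-2\phi_{n,m}(\gamma)}$ by (\ref{arclength}).

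Substituting these into the Ricci formula, the $(n-2)(h')^2\dot{r}^2$ contributions from the second and third terms cancel, and grouping by $\dot{r}^2$ and $r^2\dot{\theta}^2$ yields
\begin{align*}
\text{Ric}_{M^n}(\dot{\gamma},\dot{\gamma})=-(n-1)\Bigl(h''+\tfrac{h'}{r}\Bigr)\dot{r}^2-\Bigl[(2n-3)\tfrac{h'}{r}+(n-2)(h')^2+h''\Bigr]r^2\dot{\theta}^2.
\end{align*}
A direct computation from $h'(r)=-\frac{m}{r^{n-1}(1+\frac{m}{2r^{n-2}})}$ (already recorded in the paper via $\overline{\nabla}\phi_{n,m}$) and the resulting $h''(r)$ should give the clean identities $h''+\frac{h'}{r}=\frac{m(n-2)}{r^n(1+\frac{m}{2r^{n-2}})^2}$ and $(2n-3)\frac{h'}{r}+(n-2)(h')^2+h''=-\frac{m(n-2)}{r^n(1+\frac{m}{2r^{n-2}})^2}$, so that both coefficients share the common factor $\frac{m(n-2)}{r^n(1+\frac{m}{2r^{n-2}})^2}$ and the expression collapses to $\text{Ric}_{M^n}(\dot{\gamma},\dot{\gamma})=\frac{m(n-2)}{r^n(1+\frac{m}{2r^{n-2}})^2}\bigl[r^2\dot{\theta}^2-(n-1)\dot{r}^2\bigr]$.

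To finish I would use (\ref{arclength}) to rewrite $\dot{r}^2=(1+\frac{m}{2r^{n-2}})^{-4/(n-2)}-r^2\dot{\theta}^2$, which converts the bracket into $nr^2\dot{\theta}^2-(n-1)(1+\frac{m}{2r^{n-2}})^{-4/(n-2)}$. Absorbing the factor $(1+\frac{m}{2r^{n-2}})^{-4/(n-2)}$ into the prefactor produces the exponent $\frac{2n}{n-2}$ in the denominator stated in the lemma, and invoking the conservation law (\ref{dtheta}) to replace $r^2\dot{\theta}^2$ by $C_0^2/\bigl[r^2(1+\frac{m}{2r^{n-2}})^{8/(n-2)}\bigr]$ produces the expression $\frac{nC_0^2}{r^2(1+\frac{m}{2r^{n-2}})^{4/(n-2)}}-(n-1)$ inside the final bracket. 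The only real obstacle is pure bookkeeping: carefully tracking the powers of $1+\frac{m}{2r^{n-2}}$ through $h',h''$ and verifying the two simplifications of the coefficients above; once these are in hand, the arclength and conservation identities complete the argument.
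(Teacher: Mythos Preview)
Your argument is correct and is precisely the routine computation the paper alludes to; the paper itself omits the proof entirely, stating that ``the following result is just a simple computation, so we only state the formula without proof.'' In particular, your two claimed identities $h''+\tfrac{h'}{r}=\tfrac{m(n-2)}{r^n(1+\frac{m}{2r^{n-2}})^2}$ and $(2n-3)\tfrac{h'}{r}+(n-2)(h')^2+h''=-\tfrac{m(n-2)}{r^n(1+\frac{m}{2r^{n-2}})^2}$ check out directly from $h'(r)=-\tfrac{m}{r^{n-1}(1+\frac{m}{2r^{n-2}})}$, and the final repackaging via (\ref{arclength}) and (\ref{dtheta}) is exactly as you describe.
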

Using (\ref{dtheta}), (\ref{rdot}) and the behavior of $r=r(s)$ proved in Lemma \ref{rinfty}, one can easily confirm that $\gamma$ eventually approaches asymptotically to Euclidean geodesic lines. However, the Ricci curvature has a negative sign in radial directions. Therefore the following inequality is not straightforward to predict and depends heavily on the global properties of the Schwarzschild metrics.
\begin{prop}\label{ricciineq}
For any $r_0\geq\left(\frac{m}{2}\right)^{\frac{1}{n-2}}$ and $d>0$,
\begin{align*}
\int_0^d-\emph{Ric}_{M^n}(\dot{\gamma},\dot{\gamma})\d s<0.
\end{align*}
\end{prop}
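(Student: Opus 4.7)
The plan is to derive the inequality from a single pointwise identity together with integration by parts. Writing $\Phi(r):=(1+\tfrac{m}{2r^{n-2}})^{4/(n-2)}$ and using the arc-length relation $\Phi\dot r^2 + C_0^2/(r^2\Phi)=1$ from (\ref{arclength}) and (\ref{dtheta}), Lemma~\ref{ricci} can be recast as
\begin{align*}
-\emph{Ric}_{M^n}(\dot\gamma,\dot\gamma) \;=\; \frac{m(n-2)}{r^n\Phi^{n/2}}\bigl(n\Phi\dot r^2 - 1\bigr).
\end{align*}
I will then introduce the auxiliary quantities
\begin{align*}
G(s):=-\frac{mn(n-2)\,\dot r(s)}{r(s)^{n-1}\,\Phi(r(s))^{n/2-1}}, \qquad h(r):=\frac{n(2r^{n-2}-m)}{2r^{n-2}+m},
\end{align*}
and prove the pointwise identity $\dot G(s) = -\emph{Ric}_{M^n}(\dot\gamma,\dot\gamma)\cdot h(r(s))$ along $\gamma$. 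The design features of this choice are that $G(0)=0$ (since $\dot r(0)=0$) and that $h(r)>0$ precisely in the regime $r>(m/2)^{1/(n-2)}$.

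Establishing the identity is the main technical step and the principal obstacle. The computation itself is mechanical once one has the correct $G$ and $h$ in hand: differentiate $G$, substitute $\ddot r$ from the radial geodesic equation (\ref{first}) together with $r\dot\theta^2 = C_0^2/(r^3\Phi^2)$ from (\ref{dtheta}), eliminate $C_0$ via $C_0^2/(r^2\Phi)=1-\Phi\dot r^2$, and finally use $1+m/(2r^{n-2})=\Phi^{(n-2)/4}$ so that all the $\Phi$-exponents collapse into a common factor. The creative content is the guess that a $\dot r$-linear primitive should work and that its anomaly is captured by the explicit rational function $h(r)$.

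With the identity in place, I handle the two cases separately. If $r_0=(m/2)^{1/(n-2)}$, substituting $\dot r\equiv 0$ into (\ref{first}) shows that $r(s)\equiv r_0$ solves the geodesic system, so $\gamma$ runs along the horizon and $-\emph{Ric}_{M^n}(\dot\gamma,\dot\gamma)\equiv -m(n-2)/(r_0^n\Phi(r_0)^{n/2})$ is a negative constant, making the inequality immediate. If $r_0>(m/2)^{1/(n-2)}$, Lemma~\ref{rinfty} gives $\dot r(s)>0$ for all $s>0$, and differentiating $h$ shows $h'(r)=4mn(n-2)r^{n-3}/(2r^{n-2}+m)^2>0$. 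Integration by parts, using $G(0)=0$, yields
\begin{align*}
\int_0^d -\emph{Ric}_{M^n}(\dot\gamma,\dot\gamma)\,\d s \;=\; \frac{G(d)}{h(r(d))} + \int_0^d \frac{G(s)\,h'(r(s))\,\dot r(s)}{h(r(s))^2}\,\d s.
\end{align*}
Here $G(d)<0$ (as $\dot r(d)>0$) and $h(r(d))>0$, while $G(s)\le 0$ and $h'(r(s))\dot r(s)\ge 0$ throughout $[0,d]$. Hence the right-hand side is strictly negative, completing the proof.
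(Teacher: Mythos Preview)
Your proof is correct, and it takes a genuinely different route from the paper's.

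The paper first observes that $-\text{Ric}_{M^n}(\dot\gamma,\dot\gamma)$ is negative near $s=0$ and eventually positive, so by monotonicity it suffices to treat $d=\infty$. It then performs a chain of substitutions ($s\mapsto r\mapsto u=r(1+\tfrac{m}{2r^{n-2}})^{2/(n-2)}\mapsto t=u/C_0$) to reduce the question to the one-parameter family of integrals
\[
\int_1^\infty \frac{1}{t}\Bigl[(n-1)-\frac{n}{t^2}\Bigr]\frac{\d t}{\sqrt{(t^2-1)\,t^{n-2}(t^{n-2}-\alpha)}},\qquad \alpha\in(0,1),
\]
and handles these in a separate lemma by expanding $(1-\alpha\sin^{n-2}\psi)^{-1/2}$ as a binomial series and comparing Wallis integrals term by term.

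Your argument bypasses all of this: you discover an explicit primitive $G$ with $\dot G = -\text{Ric}_{M^n}(\dot\gamma,\dot\gamma)\cdot h(r)$ and exploit the monotonicity of the weight $h$. I verified the key identity directly from (\ref{first}), (\ref{arclength}), and (\ref{dtheta}); the factor $(2r^{n-2}-m)/(2r^{n-2}+m)$ emerges exactly as you describe. The integration-by-parts step is legitimate since $h(r(s))>0$ on $[0,d]$ when $r_0>(m/2)^{1/(n-2)}$, so $1/h$ is smooth. Your approach is more elementary (no series, no special integral identities), works directly for every finite $d$ without the $d\to\infty$ reduction, and makes the mechanism transparent: the negativity is a consequence of $G\le 0$, $G(0)=0$, and $h$ increasing. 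The paper's approach, on the other hand, produces the explicit series expansion of $R(\phi_{n,m},u_0)$ that is later reused in Section~\ref{sec4} for the perturbation estimates, which your method does not immediately supply.
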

\begin{proof}
If $r_0=\left(\frac{m}{2}\right)^{\frac{1}{n-2}}$, the geodesic will continue to exist on the horizon. In this case, $\text{Ric}_{M^n}(\dot{\gamma}, \dot{\gamma})$ is a positive constant $\frac{n-2}{2^{\frac{n}{n-2}}m^{\frac{2}{n-2}}}$, so the inequality immediately follows. 

Suppose that $r_0>\left(\frac{m}{2}\right)^{\frac{1}{n-2}}$. As it can be seen from Lemma \ref{ricci}, the Ricci curvature is positive at the beginning and changes to be negative when $r$ is sufficiently large. Because we proved that $r$ is an increasing function of $s$ in Lemma \ref{rinfty}, it is sufficient to prove the inequality for the case when $d\to\infty$. Since
\begin{align*}
\int_0^{\infty}-\emph{Ric}_{M^n}(\dot{\gamma},\dot{\gamma})\d s=\int_0^{\infty}\frac{m(n-2)}{r^n\left(1+\frac{m}{2r^{n-2}}\right)^{\frac{2n}{n-2}}}\left[(n-1)-\frac{nC_0^2}{r^2\left(1+\frac{m}{2r^{n-2}}\right)^{\frac{4}{n-2}}}\right]\d s
\end{align*}
we can finish the proof if we show the following inequality:
\begin{align}\label{ineq}
\int_0^{\infty}\frac{1}{r^n\left(1+\frac{m}{2r^{n-2}}\right)^{\frac{2n}{n-2}}}\left[(n-1)-\frac{nC_0^2}{r^2\left(1+\frac{m}{2r^{n-2}}\right)^{\frac{4}{n-2}}}\right]\d s<0.
\end{align} 
By Lemma \ref{rinfty}, we may substitute $s$ by $r$. We have
\begin{align*}
\d s=\frac{1}{\dot{r}}\d r=\frac{r\left(1+\frac{m}{2r^{n-2}}\right)^{\frac{4}{n-2}}}{\sqrt{r^2\left(1+\frac{m}{2r^{n-2}}\right)^{\frac{4}{n-2}}-C_0^2}}\d r
\end{align*}
from (\ref{rdot}), and the integral in (\ref{ineq}) becomes
\begin{align*}
\int_{r_0}^{\infty}\frac{1}{r^{n-1}\left(1+\frac{m}{2r^{n-2}}\right)^2}\left[(n-1)-\frac{nC_0^2}{r^2\left(1+\frac{m}{2r^{n-2}}\right)^{\frac{4}{n-2}}}\right]\frac{1}{\sqrt{r^2\left(1+\frac{m}{2r^{n-2}}\right)^{\frac{4}{n-2}}-C_0^2}}\d r.
\end{align*}
We again substitute $r\left(1+\frac{m}{2r^{n-2}}\right)^{\frac{2}{n-2}}$ by $u$. Then 
\begin{align*}
r^{n-2}=\frac{1}{2}\left(u^{n-2}-m+\sqrt{u^{n-2}(u^{n-2}-2m)}\right)
\end{align*}
and
\begin{align*}
\d r=\frac{u^{n-3}}{2r^{n-2}}\left(1+\frac{u^{n-2}-m}{\sqrt{u^{n-2}(u^{n-2}-2m)}}\right)\d u=\frac{ru^{n-3}}{\sqrt{u^{n-2}(u^{n-2}-2m)}}\d u.
\end{align*}
Therefore we can express the integral as
\begin{align*}
\int_{C_0}^{\infty}\frac{1}{u}\left[(n-1)-\frac{nC_0^2}{u^2}\right]\frac{1}{\sqrt{u^2-C_0^2}}\frac{1}{\sqrt{u^{n-2}(u^{n-2}-2m)}}\d u.
\end{align*}
Finally, if we substitute $u$ by $C_0t$, the integral is expressed as
\begin{align*}
\frac{1}{C_0^{n-1}}\int_1^{\infty}\frac{1}{t}\left[(n-1)-\frac{n}{t^2}\right]\frac{1}{\sqrt{t^2-1}}\frac{1}{\sqrt{t^{n-2}(t^{n-2}-\alpha)}}\d t,
\end{align*} 
where $\alpha:=\frac{2m}{C_0^{n-2}}\in(0, 1)$. Now proving (\ref{ineq}) is equivalent to proving
\begin{align*}
\int_1^{\infty}\frac{1}{t}\left[(n-1)-\frac{n}{t^2}\right]\frac{1}{\sqrt{(t^2-1)t^{n-2}(t^{n-2}-\alpha)}}\d t<0,\ \forall \alpha\in(0, 1).
\end{align*}
We show this inequality in Lemma \ref{lembel} below, and we have the result.
\end{proof}

\begin{lemma}\label{lembel}
Let $\alpha\in(0, 1)$. Then
\begin{align*}
\int_1^{\infty}\frac{1}{t}\left[(n-1)-\frac{n}{t^2}\right]\frac{1}{\sqrt{(t^2-1)t^{n-2}(t^{n-2}-\alpha)}}\d t<0.
\end{align*}
\end{lemma}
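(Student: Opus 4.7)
The plan is to change variables so that the integrand exhibits a transparent total-derivative structure, then conclude by a single integration by parts.

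First, I would apply the substitution $t = 1/\sin\phi$ with $\phi\in(0,\pi/2]$. A direct computation using $\sqrt{t^{2}-1} = \cos\phi/\sin\phi$ and $\d t = -(\cos\phi/\sin^{2}\phi)\,\d\phi$ converts the integral into
\begin{align*}
I(\alpha) := \int_{0}^{\pi/2}\frac{\sin^{n-2}\phi\,\bigl[(n-1) - n\sin^{2}\phi\bigr]}{\sqrt{1 - \alpha\sin^{n-2}\phi}}\,\d\phi.
\end{align*}
When $\alpha = 0$, this integral vanishes by the Wallis reduction $nW_{n} = (n-1)W_{n-2}$, where $W_{k} := \int_{0}^{\pi/2}\sin^{k}\phi\,\d\phi$. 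This cancellation should have a pointwise origin, and indeed it does.

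The key observation is the elementary identity
\begin{align*}
\frac{d}{d\phi}\bigl[\sin^{n-1}\phi\,\cos\phi\bigr] = (n-1)\sin^{n-2}\phi\cos^{2}\phi - \sin^{n}\phi = \sin^{n-2}\phi\,\bigl[(n-1) - n\sin^{2}\phi\bigr],
\end{align*}
which shows that the numerator in $I(\alpha)$ is a total derivative. I would then integrate by parts in $I(\alpha)$, taking $u = (1 - \alpha\sin^{n-2}\phi)^{-1/2}$ and $v = \sin^{n-1}\phi\,\cos\phi$. The boundary term
\begin{align*}
\left[\frac{\sin^{n-1}\phi\,\cos\phi}{\sqrt{1-\alpha\sin^{n-2}\phi}}\right]_{0}^{\pi/2}
\end{align*}
vanishes because $\cos(\pi/2)=0$ at the upper endpoint and $\sin^{n-1}(0)=0$ at the lower one (using $n\ge 3$ and $\alpha<1$). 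Computing $du$ and collecting terms should yield
\begin{align*}
I(\alpha) = -\frac{\alpha(n-2)}{2}\int_{0}^{\pi/2}\frac{\sin^{2n-4}\phi\,\cos^{2}\phi}{(1 - \alpha\sin^{n-2}\phi)^{3/2}}\,\d\phi,
\end{align*}
and for $\alpha\in(0,1)$ and $n\ge 3$ the integrand is nonnegative and strictly positive on a set of positive measure, so $I(\alpha)<0$.

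The main obstacle is structural rather than computational: one has to spot the substitution that recasts the $\alpha = 0$ Wallis cancellation as a genuine primitive, namely $\sin^{n-1}\phi\cos\phi$. Once this identity is visible, a single integration by parts does all the work, and no monotonicity or sign-splitting argument on the original integrand is required.
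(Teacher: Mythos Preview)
Your proof is correct. The substitution $t=1/\sin\phi$ (equivalently, the composition of the paper's two substitutions $t=1/v$ and $v=\sin\psi$) yields the same trigonometric integral $I(\alpha)$ that the paper obtains, and the identity $\frac{d}{d\phi}\bigl[\sin^{n-1}\phi\cos\phi\bigr]=\sin^{n-2}\phi\bigl[(n-1)-n\sin^{2}\phi\bigr]$ together with one integration by parts gives the closed form you claim, with vanishing boundary terms since $\alpha<1$.

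Your route differs from the paper's. The paper expands $(1-\alpha\sin^{n-2}\psi)^{-1/2}$ as a binomial series, integrates term by term, and uses the Wallis recursion $nW_n=(n-1)W_{n-2}$ to see that the constant term cancels while every higher-order coefficient is strictly negative. Your argument replaces this series comparison by a single integration by parts, producing the manifestly negative expression
\[
I(\alpha)=-\frac{\alpha(n-2)}{2}\int_{0}^{\pi/2}\frac{\sin^{2n-4}\phi\,\cos^{2}\phi}{(1-\alpha\sin^{n-2}\phi)^{3/2}}\,\d\phi.
\]
This is shorter and conceptually cleaner for the lemma itself. The paper's series approach, however, yields explicit coefficients in powers of $\alpha$ (equivalently, in powers of $m/u_0^{n-2}$), and the paper actually relies on the $j=0$ term of that expansion later in Section~\ref{sec4} to bound $R(\phi_{n,m},u_0)$ when perturbing the Schwarzschild metric. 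So while your method proves the lemma more directly, the paper's expansion does additional work downstream.
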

\begin{proof}
Substitute $t$ by $\frac{1}{v}$. Then
\begin{align*}
\int_1^{\infty}\frac{1}{t}\left[(n-1)-\frac{n}{t^2}\right]\frac{1}{\sqrt{(t^2-1)t^{n-2}(t^{n-2}-\alpha)}}\d t=\int_0^1\frac{v^{n-2}\left((n-1)-nv^2\right)}{\sqrt{(1-v^2)(1-\alpha v^{n-2})}}\d v,
\end{align*}
and by letting $v=\sin\psi$, we derive
\begin{align*}
\int_0^1\frac{v^{n-2}\left((n-1)-nv^2\right)}{\sqrt{(1-v^2)(1-\alpha v^{n-2})}}\d v=\int_0^{\frac{\pi}{2}}\frac{(n-1)\sin^{n-2}\psi-n\sin^n\psi}{\sqrt{1-\alpha\sin^{n-2}\psi}}\d\psi.
\end{align*}
Using the fact that the series
\begin{align*}
\frac{1}{\sqrt{1-x}}=1+\sum_{j=0}^{\infty}\frac{1}{2^{2j+1}}\binom{2j+1}{j}x^{j+1}
\end{align*}
converges uniformly in $|x|\leq\alpha$, we may write
\begin{align*}
\int_0^{\frac{\pi}{2}}\frac{(n-1)\sin^{n-2}\psi}{\sqrt{1-\alpha\sin^{n-2}\psi}}\d\psi&=(n-1)\int_0^{\frac{\pi}{2}}\sin^{n-2}\psi \d\psi\\&+\sum_{j=0}^{\infty}\frac{1}{2^{2j+1}}\binom{2j+1}{j}\alpha^{j+1}\left(\int_0^{\frac{\pi}{2}}(n-1)\sin^{(j+2)(n-2)}\psi \d\psi\right)
\end{align*}
and
\begin{align*}
\int_0^{\frac{\pi}{2}}\frac{n\sin^n\psi}{\sqrt{1-\alpha\sin^{n-2}\psi}}\d\psi&=n\int_0^{\frac{\pi}{2}}\sin^n\psi \d\psi\\
&+\sum_{j=0}^{\infty}\frac{1}{2^{2j+1}}\binom{2j+1}{j}\alpha^{j+1}\left(\int_0^{\frac{\pi}{2}}n\sin^{(j+2)(n-2)+2}\psi \d\psi\right).
\end{align*}
Since
\begin{align*}
n\int_0^{\frac{\pi}{2}}\sin^n\psi \d\psi=(n-1)\int_0^{\frac{\pi}{2}}\sin^{n-2}\psi \d\psi
\end{align*}
and
\begin{align*}
\int_0^{\frac{\pi}{2}}n\sin^{(j+2)(n-2)+2}\psi \d\psi&=n\frac{(j+2)(n-2)+1}{(j+2)(n-2)+2}\int_0^{\frac{\pi}{2}}\sin^{(j+2)(n-2)}\psi \d\psi\\
&>\int_0^{\frac{\pi}{2}}(n-1)\sin^{(j+2)(n-2)}\psi \d\psi,\ \forall j\geq0,
\end{align*}
we can conclude that
\begin{align*}
\int_0^{\frac{\pi}{2}}\frac{(n-1)\sin^{n-2}\psi-n\sin^n\psi}{\sqrt{1-\alpha\sin^{n-2}\psi}}\d\psi<0
\end{align*}
and the inequality is obtained.
\end{proof}

\begin{rmk}
\textup{In the literature, asymptotically flat metrics similar to the Schwarzschild ones also have been considered. They are defined as
\begin{align*}
g=\left(1+\frac{m}{2|x|^{\frac{n-2k}{k}}}\right)^{\frac{4k}{n-2k}}g_{\mathbb{R}^n},           
\end{align*}
where $n$ is a dimension and $k$ is a positive integer. We remark that a similar calculation also yields the same inequality as in Proposition \ref{ricciineq} provided that $n>2k\geq2$. One may refer to \cite{CTZ} for the physical background of these examples.}
\end{rmk}


\section{Free boundary minimal hypersurfaces in $M^n$}\label{sec3}
We prove that totally geodesic hyperplanes attract free boundary minimal hypersurfaces by applying Frankel's argument(\cite{F}):
\begin{thm} \label{thm1}
Let $\Sigma^{n-1}$ be a smooth connected two-sided properly immersed minimal hypersurface in the Riemannian Schwarzschild $n$-manifold $(M^n, g^n_{Sch})$ such that $\partial \Sigma^{n-1} \subset \partial M^n$ and $\Sigma^{n-1}$ meets $\partial M^n$ orthogonally along $\partial \Sigma^{n-1}$. Assume that the distance between $\Sigma^{n-1}$ and a totally geodesic hyperplane $P$ is achieved in a bounded region, i.e.,
\begin{align}\label{dist}
\inf_{R\to\infty}\emph{dist}_{M^n}(\Sigma^{n-1}\cap B_R, P\cap B_R)=\emph{dist}_{M^n}(\Sigma^{n-1}\cap B_{R_0}, P\cap B_{R_0})
\end{align}
for some $R_0\in \left[\left(\frac{m}{2}\right)^{\frac{1}{n-2}}, \infty\right)$. Here, $B_R$ denotes the $n$-dimensional Euclidean ball of radius $R$ centered at the origin. Then $\Sigma^{n-1}$ and $P$ must intersect.
\end{thm}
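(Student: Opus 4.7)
The plan is to derive a contradiction from $\Sigma^{n-1}\cap P=\emptyset$ using Frankel's second variation, with Proposition~\ref{ricciineq} substituting for the pointwise positive-Ricci hypothesis of the classical theorem. Assuming the two submanifolds are disjoint, condition (\ref{dist}) together with properness makes $\Sigma^{n-1}\cap B_{R_0}$ and $P\cap B_{R_0}$ compact, so $d:=\mathrm{dist}_{M^n}(\Sigma^{n-1},P)>0$ is realized at some $(p,q)\in\Sigma^{n-1}\times P$, and one obtains a minimizing unit-speed geodesic $\gamma:[0,d]\to M^n$ with $\gamma(0)=q$ and $\gamma(d)=p$.

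A first-variation analysis at the endpoints yields $\dot\gamma(0)\perp T_qP$ and $\dot\gamma(d)\perp T_p\Sigma^{n-1}$. If one endpoint, say $p\in\partial\Sigma^{n-1}\subset\partial M^n$, lies on the horizon, the admissibility of variations and the requirement that $\gamma$ remain in $M^n$ force $\dot\gamma(d)\in T_p\partial M^n$; total geodesy of the horizon then pins $\gamma$ entirely inside $\partial M^n$, and the symmetric argument at $q$ forces $q\in P\cap\partial M^n$ as well, with the two perpendicularity relations still holding. Hence either (i) both $p,q$ are in the relative interior of their respective hypersurfaces or (ii) $\gamma\subset\partial M^n$. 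In either case $\gamma$ leaves $q$ orthogonally to a hyperplane through the origin, so spherical symmetry of $g^n_{Sch}$ confines $\gamma$ to the $2$-plane spanned by $q$ and $\dot\gamma(0)$ and places it in the form (\ref{gamma}) with $r_0=|q|$.

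Next I would choose an orthonormal parallel frame $E_1,\ldots,E_{n-1}$ along $\gamma$ orthogonal to $\dot\gamma$, with $\{E_i(0)\}$ a basis of $T_qP=\dot\gamma(0)^\perp$; parallel transport preserves $\dot\gamma^\perp$, and $\dot\gamma(d)\perp T_p\Sigma^{n-1}$ then automatically makes $\{E_i(d)\}$ a basis of $T_p\Sigma^{n-1}$. In case (ii) I would reserve $E_{n-1}=N_{\partial M^n}$ (parallel along $\gamma\subset\partial M^n$ because the horizon is totally geodesic, and lying in both $T_qP$ and $T_p\Sigma^{n-1}$) so that the variations in the other $E_j$-directions are two-sided within $P\cap\partial M^n$ and $\partial\Sigma^{n-1}$; the $N_{\partial M^n}$-variation is one-sided into $M^n$, but $\langle N_{\partial M^n},\dot\gamma(0)\rangle=0$ kills its first variation and keeps $L''(0)\ge 0$ intact. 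Extending each $E_i$ to a variation with endpoints sliding on $P$ and $\Sigma^{n-1}$, the second variation formula gives
\begin{align*}
0\le L''_{E_i}(0)=-\int_0^d R(E_i,\dot\gamma,E_i,\dot\gamma)\,\d s+\langle\alpha^P(E_i(0),E_i(0)),\dot\gamma(0)\rangle-\langle\alpha^{\Sigma^{n-1}}(E_i(d),E_i(d)),\dot\gamma(d)\rangle,
\end{align*}
and summing over $i$ collapses the boundary contributions to $\langle\vec H_P,\dot\gamma(0)\rangle-\langle\vec H_{\Sigma^{n-1}},\dot\gamma(d)\rangle=0$ by total geodesy of $P$ and minimality of $\Sigma^{n-1}$. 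What remains is $0\le-\int_0^d\mathrm{Ric}_{M^n}(\dot\gamma,\dot\gamma)\,\d s$, directly contradicting Proposition~\ref{ricciineq}.

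The main obstacle is the sign behavior of $\mathrm{Ric}_{M^n}(\dot\gamma,\dot\gamma)$: by Lemma~\ref{ricci} it is positive near the horizon but negative at infinity, so the classical pointwise Frankel inequality fails and no local curvature sign can deliver $L''(0)<0$. The entire purpose of Proposition~\ref{ricciineq}, whose proof exploits the $2$-plane reduction above together with a delicate series estimate, is to secure the net negativity of the integrated Ricci on every geodesic of the form (\ref{gamma}); it is this global curvature estimate, rather than any boundary manipulation, that allows Frankel's argument to survive in the non-positive-Ricci Schwarzschild setting.
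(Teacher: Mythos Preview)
Your proof is correct and follows the same Frankel second-variation strategy as the paper, with Proposition~\ref{ricciineq} supplying the crucial integrated Ricci sign. The one structural difference is how the free boundary is handled: you work directly in $M^n$ and run a case analysis (interior endpoints versus $\gamma\subset\partial M^n$), checking orthogonality and one-sided second variation by hand. The paper instead invokes the inversion isometry $I$ to pass to the doubled Schwarzschild manifold $\widehat{M^n}$, replacing $\Sigma^{n-1}$ and $P$ by the boundaryless minimal hypersurfaces $\widehat{\Sigma^{n-1}}=\Sigma^{n-1}\cup I(\Sigma^{n-1})$ and $\widehat{P}=P\cup I(P)$; the minimizing geodesic in $\widehat{M^n}$ is then automatically orthogonal at both endpoints with no boundary cases to consider, and the reflection symmetry lets one assume $\gamma\subset M^n$. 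This buys a cleaner argument and sidesteps the subtleties you sketch (e.g.\ why the minimizing curve in a manifold with boundary is a genuine geodesic, and why $\dot\gamma(d)\in T_p\partial M^n$ when $p\in\partial\Sigma^{n-1}$), whereas your direct approach makes those points explicit but requires more care.
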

\begin{proof}
Assume the contrary, that is, the distance is positive:
\begin{align*}
\inf_{R\to\infty}\text{dist}_{M^n}(\Sigma^{n-1}\cap B_R, P\cap B_R)=\text{dist}_{M^n}(\Sigma^{n-1}\cap B_{R_0}, P\cap B_{R_0})>0.
\end{align*}
Recall that the inversion $I$ with respect to $\partial M^n$ is an isometry of the doubled Schwarzschild manifold $\widehat{M^n}$ (see Section \ref{pre}). Both $\Sigma^{n-1}$ and $P$ meet $\partial M^n$ orthogonally, so we obtain smooth minimal hypersurfaces $\widehat{\Sigma^{n-1}}:=\Sigma^{n-1}\cup I(\Sigma^{n-1})$ and $\widehat{P}:=P\cup I(P)$ in $\widehat{M^n}$. By the assumption, the distance between $\widehat{\Sigma^{n-1}}$ and $\widehat{P}$ is also achieved in a bounded region in $\widehat{M^n}$ and is positive. Therefore there exists a geodesic $\gamma=\gamma(s): [0, d]\to \widehat{M^n}$ realizing the distance, where $s$ is an arclength parametrization and $\gamma(0)\in\widehat{P}$. Note that $\gamma$ is orthogonal to both $\widehat{\Sigma^{n-1}}$ and $\widehat{P}$. Then, by the uniqueness of geodesics, we may assume that $\gamma$ lies in $M^n$. (This can also be followed by the fact that $I$ is an isometry and $\gamma$ is minimizing.)

Now we apply Frankel's argument(\cite{F}) to $\gamma$. Choose orthonormal basis of $T_{\gamma(0)}P$ and extend them to parallel normal vector fields $\{E_1, E_2,\cdots, E_{n-1}\}$ along $\gamma$. For each $j$, consider a variation of $\gamma$ generated by $E_j$. The second variation formula for the arclength reads
\begin{align*}
\delta^2L_{\gamma}(E_j, E_j)&=g^n_{Sch}(\nabla_{E_j}E_j(\gamma(d)), \dot{\gamma}(d))\\&-g^n_{Sch}(\nabla_{E_j}E_j(\gamma(0)), \dot{\gamma}(0))-\int_0^d\text{R}_{M^n}(\dot{\gamma},E_j,E_j,\dot{\gamma})\d s.
\end{align*}
Summing up for all $j$,
\begin{align*}
0&\leq\sum_{j=1}^{n-1}\delta^2L_{\gamma}(E_j, E_j)\\
&=g^n_{Sch}\left(\vec{H}_{\Sigma}(\gamma(d)),\dot{\gamma}(d)\right)-g^n_{Sch}\left(\vec{H}_{P}(\gamma(0)),\dot{\gamma}(0)\right)-\int_0^d\text{Ric}_{M^n}(\dot{\gamma},\dot{\gamma})\d s\\
&=-\int_0^d\text{Ric}_{M^n}(\dot{\gamma},\dot{\gamma})\d s,
\end{align*}
where $\vec{H}$ is the mean curvature vector. Note that the first inequality comes from the fact that $\gamma$ is minimizing, and we used minimality of $\Sigma^{n-1}$ and $P$ in the last equality. However, we proved in Proposition \ref{ricciineq} that
\begin{align*}
-\int_0^d\text{Ric}_{M^n}(\dot{\gamma},\dot{\gamma})\d s<0
\end{align*}
for any $d>0$. This is a contradiction. Thus $\Sigma^{n-1}$ and $P$ must intersect.
\end{proof}
\begin{rmk}\label{conecone}
\normalfont We see that dimension of $P$ can be less than $n-1$ since it is totally geodesic. On the other hand, cones over minimal submanifolds in $\mathbb{S}^{n-1}$ also give us nontrivial minimal examples in $M^n$. In fact, we can set $P$ as codimension $1$ minimal cones to have the same result since tangent spaces of minimal cones are totally geodesic hyperplanes and minimizing geodesics are identical to the ones shown in Section \ref{pre}.
\end{rmk}
\begin{rmk}\label{slabslab}
\normalfont We should mention that \cite[Theorem 1.7]{CCE} provides us with more general perspectives regarding the case $n=3$ in the above theorem. Indeed, \cite[Theorem 1.7]{CCE} asserts that any slab bounded by complete minimal surfaces in an asymptotically flat $3$-manifold with non-negative scalar curvature and horizon boundary must be a Euclidean slab. A slight modification of their proof can be adapted to our case when $n=3$. One of the main ideas in their proof was to exploit non-existence of complete stable minimal surfaces. However, there exist stable minimal hypersurfaces when $n\geq4$ (see \cite{BE}) making it harder to apply similar arguments to higher dimensional cases. Moreover, their proof may not work in spaces where the scalar curvature can be negative. We discuss this matter in Section \ref{sec4}.
\end{rmk}

Normally, Frankel’s argument was applied to manifolds with positive Ricci curvature contrary to our cases (\cite{FLi,F,L}). On the other hand, Frankel’s property has been generalized to $f$-minimal hypersurfaces in smooth metric measure spaces $(M,g,e^{-f}d\text{Vol})$. In many cases, the following Bakry-\'{E}mery Ricci curvature is assumed to be positive:
\begin{align*}
\text{Ric}_f := \text{Ric} + \text{Hess} f.
\end{align*} 
For related results, see \cite{IPR} and the references therein.

A minimal hypersurface in the Schwarzschild manifold can be considered as a $f$-minimal hypersurface in the smooth metric measure space $(\mathbb{R}^n, g_{\mathbb{R}^n},e^{-f}d\text{Vol})$, where
\begin{align*}
f=-\log \left( 1+ \frac{m}{2|x|^{n-2}}\right)^{\frac{4}{n-2}}.
\end{align*}
We can calculate the Bakry-\'{E}mery Ricci curvature in this case as follows:
\begin{align*}
\text{Ric}_f (A,B)=& \frac{2m}{|x|^n \left( 1+ \frac{m}{2|x|^{n-2}}\right)}g_{\mathbb{R}^n}(A,B)\\&-\frac{2m(n|x|^{n-2}+m)}{|x|^{2n} \left( 1+\frac{m}{2|x|^{n-2}}\right)^2}g_{\mathbb{R}^n}(X,A)g_{\mathbb{R}^n}(X,B).
\end{align*}
We see that it is negative in radial directions. Furthermore as $|x| \rightarrow \infty$ we get $|\text{Ric}_f| \rightarrow 0$, so there is no positive lower bound for $|\text{Ric}_f|$. Therefore our result shows that Frankel's argument also can be applied to some non-positively curved manifolds.  

The above theorem implies the following half-space theorem for a subclass of free boundary minimal hypersurfaces in $M^n$:
\begin{cor}[Half-space theorem] \label{cor1}
Let $\Sigma^{n-1}$ and $(M^n, g^n_{Sch})$ be as in Theorem \ref{thm1}. Suppose that $\Sigma^{n-1}$ is on one side of a totally geodesic hyperplane $P$ and satisfies (\ref{dist}) with respect to $P$. Then $\Sigma^{n-1}=P$.
\end{cor}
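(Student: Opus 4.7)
The plan is to deduce the half-space statement as a short consequence of Theorem \ref{thm1} combined with the strong maximum principle for minimal hypersurfaces. First, I would observe that the hypotheses of Theorem \ref{thm1} are exactly those of the corollary, so Theorem \ref{thm1} applies and guarantees that $\Sigma^{n-1} \cap P \neq \emptyset$. Pick any $p \in \Sigma^{n-1} \cap P$.

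Next, I would use the one-sidedness hypothesis. Since $\Sigma^{n-1}$ lies on one side of $P$ and meets $P$ at $p$, the two hypersurfaces are tangent at $p$: they share a tangent hyperplane there, and $\Sigma^{n-1}$ lies locally on one side of $P$ in a neighborhood of $p$. Now I would split into two cases depending on the location of $p$.

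If $p$ is an interior point of $M^n$ (equivalently, $p \notin \partial M^n$), I would express $\Sigma^{n-1}$ and $P$ near $p$ as graphs of smooth functions $u, v$ over their common tangent hyperplane. Both functions satisfy the minimal surface equation in the Riemannian metric $g^n_{Sch}$, which is a uniformly elliptic quasilinear PDE, and they agree at $p$ with $u \le v$ (say) nearby. The classical strong maximum principle for solutions of quasilinear elliptic equations forces $u \equiv v$ in a neighborhood of $p$, so $\Sigma^{n-1}$ and $P$ coincide in a neighborhood of $p$. If instead $p \in \partial M^n$, then because both $\Sigma^{n-1}$ and $P$ meet $\partial M^n$ orthogonally and are tangent at $p$, I would invoke the boundary (Hopf-type) maximum principle for free boundary minimal hypersurfaces: writing $\Sigma^{n-1}$ and $P$ as graphs over $P$ in half-space coordinates adapted to $\partial M^n$, the difference satisfies a linear elliptic inequality with a Neumann boundary condition coming from orthogonal incidence, and hence must vanish identically near $p$.

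Finally, once I know $\Sigma^{n-1}$ and $P$ agree on an open neighborhood of $p$, connectedness of $\Sigma^{n-1}$ together with a standard clopen argument (the set where $\Sigma^{n-1}$ locally coincides with $P$ is clearly open, and it is closed by the same maximum-principle argument at limit points) yields $\Sigma^{n-1} \subset P$. Since $\Sigma^{n-1}$ is a properly immersed smooth hypersurface contained in the smooth hypersurface $P$ of the same dimension, we conclude $\Sigma^{n-1} = P$. The main obstacle I anticipate is the boundary case: one needs the correct version of the Hopf boundary lemma adapted to the free boundary (Neumann) setting, but this is by now standard in the free boundary minimal literature, so the argument should go through without difficulty.
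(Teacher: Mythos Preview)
Your proposal is correct and follows essentially the same approach as the paper's own proof: apply Theorem \ref{thm1} to obtain a point of contact, then invoke the interior or boundary (free boundary) maximum principle together with a standard unique continuation/clopen argument to conclude $\Sigma^{n-1}=P$. The paper states this in three sentences without unpacking the interior/boundary dichotomy or the connectedness step, but the logic is identical.
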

\begin{proof}
By Theorem \ref{thm1}, we have $\Sigma^{n-1}\cap P\neq\emptyset$. Since $\Sigma^{n-1}$ is on one side of $P$, $\Sigma^{n-1}$ should touch $P$ at some points. Then the  result follows from the boundary or interior maximum principle.
\end{proof}

\begin{rmk}
\textup{It is generally difficult to classify asymptotic behaviors of complete minimal ends without additional assumptions such as conditions on total curvature, volume growth, or stability (see \cite{BR,C,CM} for more details). Therefore it can be said that (\ref{dist}) of Theorem \ref{thm1} covers, in a sense, a wide class of minimal ends.}
\end{rmk}
\section{Perturbation of the Schwarzschild metric}\label{sec4}
As we mentioned in Remark \ref{slabslab}, the slab theorem in \cite{CCE} implies that the Frankel property holds in an asymptotically flat 3-manifold with non-negative scalar curvature and horizon boundary. Since they used the stability inequality along with the Schoen-Yau trick, the sign of the scalar curvature plays an important role. 

In Theorem \ref{thm1}, we obtained the Frankel property in the Schwarzschild manifold under the restriction on the distance condition (\ref{dist}). The way we got this result was to apply Frankel's argument directly by observing the Ricci curvature along the geodesic (see Proposition \ref{ricciineq}). Since the Schwarzschild metric is scalar-flat and the integral
\begin{align*}
\int_0^{\infty}-\emph{Ric}_{M^n}(\dot{\gamma},\dot{\gamma})\d s
\end{align*}
is strictly negative, we can guess that our method can be extended to some metrics close to the Schwarzschild metric regardless of the sign of the scalar curvature. However, there still remains a question if one could remove the restriction on the distance at infinity which was possible in \cite{CCE} when $n=3$.

In the rest of this section, we provide a way to perturb the Schwarzschild metric to satisfy the same Ricci curvature inequality. Note that this perturbation contains a metric which has both positive and negative scalar curvature.

We restrict ourselves to consider rotationally symmetric metrics $g=e^{2\phi}g_{\mathbb{R}^n}$ with the following conditions:
\begin{align*}
\text{(a)}&\  e^{\phi(\frac{R^2}{r})}\cdot\frac{R^2}{r}=e^{\phi(r)}\cdot r,\ \forall r>0.\\
\text{(b)}&\  1+r\frac{\d\phi}{\d r}(r)>0,\ \forall r>R_{\phi}.
\end{align*}
The first one guarantees the existence of a horizon boundary where we denote its radius as $R_{\phi}$. By the second assumption, $u=u(r)=re^{\phi}$ has an inverse function $r=r(u)$ $(u\geq C_{\phi}:=R_{\phi}e^{\phi(R_{\phi})})$. 

Now define
\begin{align*}
f_{\phi}(u):=1+r(u)\frac{\d\phi}{\d r}(r(u)).
\end{align*}
Then $f_{\phi}(C_{\phi})=0$ by (a) and it is easy to see that $\frac{1}{uf_{\phi}(u)}$ is integrable on $u\geq C_{\phi}$.
If the geodesic $\gamma$ starts at $r=r_0$ we get
\begin{align*}
&\int_0^{\infty}-\emph{Ric}_{M^n}(\dot{\gamma},\dot{\gamma})\d s\\
=&\int_{u_0}^{\infty}\left[\left(\frac{n-1}{u}-\frac{(n-2)u_0^2}{u^3}\right)\cdot\left(u\frac{\d f_{\phi}}{\d u}\right)-\frac{(n-2)u_0^2}{u^3}\cdot B(f_{\phi})\right]\frac{1}{\sqrt{u^2-u_0^2}}\d u,
\end{align*}
where $B(x)=\frac{1}{x}-x$ and $u_0=u(r_0)$. We define $R(\phi, u_0)$ to be the right-hand side.
\begin{ex}[The Schwarzschild metric]\label{exex}
\normalfont When $\phi=\phi_{n,m}$ as in Section \ref{pre}, we have
\begin{align*}
f_{\phi}(u)=f_{n,m}(u):=\sqrt{1-\frac{2m}{u^{n-2}}}
\end{align*}
and $R_{\phi}=\left(\frac{m}{2}\right)^{\frac{1}{n-2}}$, $C_{\phi}=(2m)^{\frac{1}{n-2}}$. The computation in Section \ref{pre} implies that 
\begin{align*}
R(\phi_{n,m}, u_0)=-\sum_{j=0}^{\infty}\frac{1}{2^{2j+1}}&\cdot\binom{2j+1}{j}\cdot\frac{n-2}{2}\cdot\frac{j+1}{j+\frac{2n-2}{n-2}}\\
&\cdot\left(\int_0^{\frac{\pi}{2}}\sin^{(j+2)(n-2)}\psi\d \psi\right)\cdot\frac{2^{j+2}m^{j+2}}{u_0^{(n-2)(j+2)+1}}.
\end{align*}
\end{ex}
\begin{rmk}\label{condf}
\normalfont Once we get $f$ defined in $[C_f,\infty)$ such that $f(C_f)=0$ and $\frac{1}{uf(u)}$ is integrable on $[C_f,\infty)$, we can explicitly construct a rotationally symmetric metric $e^{2\phi}g_{\mathbb{R}^n}$ as follows. Let
\begin{align*}
h(u):=R_fe^{\int_{C_f}^u\frac{1}{xf(x)}\d x}
\end{align*}
for some $R_f>0$ and $u=u(h)$ be the inverse of $h$. Then
\begin{align*}
\phi_f(h):=\log\frac{u(h)}{h}
\end{align*} 
satisfies (a) and (b). The metric $e^{2\phi_f}g_{\mathbb{R}^n}$ has the horizon radius $R_f$ and $f_{\phi_f}=f$.
\end{rmk}
Now we get perturbations of the Schwarzschild metric with $R(\phi, u_0)<0$:
\begin{thm}
Fix $m>0$. Suppose that we have a rotationally symmetric metric $e^{2\phi}g_{\mathbb{R}^n}$ satisfying (a), (b), and $C_{\phi}\geq (2m)^{\frac{1}{n-2}}$. If 
\begin{align}\label{perturbb}
u\frac{\d f_{\phi}}{\d u}-u\frac{\d f_{n,m}}{\d u}\leq\frac{a}{u^{2n-4}}\ \text{and}\ B(f_{\phi})-B(f_{n,m})\geq-\frac{b}{u^{2n-4}}
\end{align}
for some $a,b\geq0$ satisfying
\begin{align}\label{abab}
(3n-4)a+(2n-3)(n-2)b<(n-2)^2m^2,
\end{align}
then $R(\phi, u_0)<0$ for all $u_0\geq C_{\phi}$.
\end{thm}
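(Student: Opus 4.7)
The strategy is to split $R(\phi, u_0) = R(\phi_{n,m}, u_0) + \bigl[R(\phi, u_0) - R(\phi_{n,m}, u_0)\bigr]$, bound the perturbation difference from above using the two pointwise estimates in (\ref{perturbb}), and then appeal to the explicit series formula for $R(\phi_{n,m}, u_0)$ from Example \ref{exex} to absorb that error. The point is that the error, at the dominant order $u_0^{-(2n-3)}$, will turn out to be precisely proportional to $(3n-4)a + (2n-3)(n-2)b$, while the $j=0$ term of the Schwarzschild series furnishes a budget of $(n-2)^2 m^2$ at the same order --- which is exactly the matching that the hypothesis (\ref{abab}) exploits.

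First I would check that the two multiplicative coefficients appearing in the integrand defining $R(\phi, u_0)$ are non-negative on $[u_0, \infty)$. Indeed, $\frac{n-1}{u}-\frac{(n-2)u_0^2}{u^3}$ is positive because $u \geq u_0$ forces $(n-1)u^2 \geq (n-1)u_0^2 > (n-2)u_0^2$, and $\frac{(n-2)u_0^2}{u^3}$ is obviously positive. Inserting (\ref{perturbb}) into the integrand pointwise therefore yields
\[
R(\phi, u_0) - R(\phi_{n,m}, u_0) \leq \int_{u_0}^{\infty}\left[\left(\frac{n-1}{u}-\frac{(n-2)u_0^2}{u^3}\right)\frac{a}{u^{2n-4}} + \frac{(n-2)u_0^2}{u^3}\cdot\frac{b}{u^{2n-4}}\right]\frac{\d u}{\sqrt{u^2-u_0^2}}.
\]

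To evaluate the resulting error integrals I would use the same substitution $u = u_0/\sin\psi$ employed in the proof of Lemma \ref{lembel}; this yields the elementary identity
\[
\int_{u_0}^{\infty}\frac{\d u}{u^{k+1}\sqrt{u^2-u_0^2}} = \frac{W_k}{u_0^{k+1}}, \qquad W_k := \int_0^{\pi/2}\sin^k\psi\,\d\psi,
\]
valid for every integer $k \geq 1$. Applying this with $k = 2n-4$ and $k = 2n-2$, together with the standard reduction $W_{2n-2} = \tfrac{2n-3}{2(n-1)} W_{2n-4}$, the entire error term will collapse to
\[
\frac{W_{2n-4}}{2(n-1)\, u_0^{2n-3}}\bigl[(3n-4)a + (2n-3)(n-2)b\bigr].
\]
The little algebraic identity producing $3n-4$ is $2(n-1)^2 - (n-2)(2n-3) = 3n-4$.

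For the Schwarzschild contribution, I would observe that every term in the series from Example \ref{exex} is strictly negative, so retaining only the $j=0$ term already provides the upper bound
\[
R(\phi_{n,m}, u_0) \leq -\frac{(n-2)^2}{2(n-1)}\, W_{2n-4}\cdot\frac{m^2}{u_0^{2n-3}}.
\]
Adding this to the error estimate yields
\[
R(\phi, u_0) \leq \frac{W_{2n-4}}{2(n-1)\, u_0^{2n-3}}\Bigl[(3n-4)a + (2n-3)(n-2)b - (n-2)^2 m^2\Bigr],
\]
which is strictly negative by hypothesis (\ref{abab}). The only real subtlety in the whole argument is the bookkeeping: one must be careful with the sign conventions and track how the coefficients of $a$ and $b$ recombine after the elementary integrals, so that the output matches the constants appearing in (\ref{abab}) exactly. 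Everything else is direct substitution.
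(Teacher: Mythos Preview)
Your proof is correct and follows essentially the same approach as the paper's: split off $R(\phi_{n,m},u_0)$, use the positivity of the two weight functions on $[u_0,\infty)$ to bound the difference via \eqref{perturbb}, evaluate the resulting integrals through the substitution $u=u_0/\sin\psi$ and the Wallis recursion, and then compare against the $j=0$ term of the Schwarzschild series. The one small point the paper makes explicit that you leave implicit is why the hypothesis $C_\phi\geq(2m)^{1/(n-2)}$ is needed: it guarantees that $f_{n,m}$ is defined on all of $[u_0,\infty)$ for every $u_0\geq C_\phi$, so that $R(\phi_{n,m},u_0)$ and the difference integral make sense.
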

\begin{proof}
We first calculate the following integral as in Section \ref{pre} to obtain
\begin{align*}
\int_{u_0}^{\infty}\left[\left(\frac{n-1}{u}-\frac{(n-2)u_0^2}{u^3}\right)\cdot\frac{a}{u^{2n-4}}+\frac{(n-2)u_0^2}{u^3}\cdot\frac{b}{u^{2n-4}}\right]\frac{1}{\sqrt{u^2-u_0^2}}\d u\\
=\frac{1}{u_0^{2n-3}}\cdot\int_0^{\frac{\pi}{2}}\sin^{2n-4}\psi\d\psi\cdot\left(\frac{3n-4}{2n-2}a+\frac{(2n-3)(n-2)}{2n-2}b\right).
\end{align*}
Then the positivity of the terms
\begin{align*}
\frac{n-1}{u}-\frac{(n-2)u_0^2}{u^3},\ \frac{(n-2)u_0^2}{u^3}
\end{align*}
when $u\geq u_0$ and (\ref{perturbb}) imply that
\begin{align*}
R(\phi,u_0)\leq R(\phi_{n,m},u_0)+\frac{1}{u_0^{2n-3}}\cdot\int_0^{\frac{\pi}{2}}\sin^{2n-4}\psi\d\psi\cdot\left(\frac{3n-4}{2n-2}a+\frac{(2n-3)(n-2)}{2n-2}b\right).
\end{align*}
Note that the assumption $C_{\phi}\geq(2m)^{\frac{1}{n-2}}$ enables us to use the term $R(\phi_{n,m},u_0)$ for any $u_0\geq C_{\phi}$. By the series expansion in Example \ref{exex}, we have
\begin{align*}
R(\phi_{n,m},u_0)<-\frac{(n-2)^2m^2}{2n-2}\cdot\frac{1}{u_0^{2n-3}}\cdot\int_0^{\frac{\pi}{2}}\sin^{2n-4}\psi\d\psi,
\end{align*}
where the right-hand side corresponds to the term when $j=0$. Therefore
\begin{align*}
R(\phi,u_0)<\frac{1}{u_0^{2n-3}}\cdot\int_0^{\frac{\pi}{2}}\sin^{2n-4}\psi\d\psi\cdot\left(\frac{3n-4}{2n-2}a+\frac{(2n-3)(n-2)}{2n-2}b-\frac{(n-2)^2m^2}{2n-2}\right),
\end{align*}
and by (\ref{abab}), we can conclude that $R(\phi,u_0)<0$.
\end{proof}
If $\phi=\phi_{n,m}$ is the Schwarzschild metric, it trivially satisfies the condition with $a=b=0$. Therefore the above theorem gives a perturbation of $\phi_n$. The following example shows that there are nontrivial metrics in the above class whose scalar curvature can be both positive and negative.
\begin{ex}
\normalfont For simplicity, we only consider the case when $n=3$. Let 
\begin{align*}
E(u)=
\begin{cases}
0 & \text{if } 2m\leq u\leq3m,\\
-\frac{m}{256}(u-3m) & \text{if } 3m\leq u\leq4m,\\
\frac{m}{256}(u-5m) & \text{if } 4m\leq u\leq6m,\\
\frac{m^2}{256} & \text{if } u\geq6m.
\end{cases}
\end{align*}
After smoothing the function $E$, we denote it as $\mathcal{E}$. If we let
\begin{align*}
f(u):=f_{3,m}(u)+\frac{\mathcal{E}(u)}{u^2},\ u\geq2m,
\end{align*}
then $f$ satisfies the conditions in Remark \ref{condf} so that we obtain the metric $e^{2\phi}g_{\mathbb{R}^3}$. One can check easily that (\ref{perturbb}) and (\ref{abab}) hold for this metric with $a=b=\frac{m^2}{16}$. Moreover, since the scalar curvature is given by
\begin{align*}
\text{Scal}(u)=-\frac{2f(u)}{u^2}\left(2u\frac{\d f}{\d u}(u)-B(f(u))\right),
\end{align*}
it has negative sign when $u$ is slightly greater than $3m$ and has positive sign when $u$ is large enough.
\end{ex}

\end{document}